\documentclass[12pt,reqno,oneside]{amsart}
\usepackage[asymmetric,top=3.5cm,bottom=4.0cm,left=3.1cm,right=3.1cm]{geometry}

\newtheorem{tm}{Theorem}[section]

\newtheorem{que}{Question}[section]

\theoremstyle{remark}
\newtheorem{rk}{Remark}[section]
\newtheorem{ex}{Example}[section] 

\numberwithin{equation}{section}

\usepackage{graphicx} 
\usepackage{extarrows}
\usepackage{latexsym}
\usepackage{amsmath}
\usepackage{amssymb}
\usepackage{amsfonts}
\usepackage{verbatim}
\usepackage{mathrsfs}
\usepackage[modulo]{lineno} 
\usepackage[latin1]{inputenc}
\usepackage{color}
\usepackage{xcolor}
\usepackage[colorlinks,citecolor=blue,urlcolor=blue]{hyperref}
\usepackage{soul}



\newcommand{\ee}{\mathbb E}

\newcommand{\pp}{\mathbb P}
\newcommand{\nn}{\mathbb N}
\newcommand{\rr}{\mathbb R}

\newcommand{\AAA}{\mathcal A}
\newcommand{\BB}{\mathcal B}
\newcommand{\CC}{\mathcal C}
\newcommand{\DD}{\mathcal D}

\newcommand{\PP}{\mathcal P}

\newcommand{\FFF}{\mathscr F}

\newcommand{\<}{\langle}
\renewcommand{\>}{\rangle}
\allowdisplaybreaks \allowdisplaybreaks[4]

\newcommand{\dd}{\mathrm{d}}

\newcommand{\abs}[1]{\left\lvert #1 \right\rvert}
\newcommand{\norm}[1]{\left\lVert #1 \right\rVert}


\begin{document}

\title[Inversions of stochastic processes from ergodic measures of SDEs]
{Inversions of stochastic processes from ergodic measures of Nonlinear SDEs}

\author{Hongyu LIU}
\address{Department of Mathematics, City University of Hong Kong, Kowloon, Hong Kong SAR, P.R. China}
\email{hongyliu@cityu.edu.hk, hongyu.liuip@gmail.com}

\author{Zhihui LIU}
\address{Department of Mathematics, Southern University of Science and Technology, Shenzhen 518055, P.R. China}
\email{liuzh3@sustech.edu.cn, liuzhihui@lsec.cc.ac.cn}
\thanks{The work of H. Liu is supported by the Hong Kong RGC General Research Funds (projects 11311122, 11300821, and 11303125), the NSFC/RGC Joint Research Fund (project  N\_CityU101/21), the France-Hong Kong ANR/RGC Joint Research Grant, A-CityU203/19. The work of Z. Liu is supported by the National Natural Science Foundation of China, No. 12101296, Guangdong Basic and Applied Basic Research Foundation, No. 2024A1515012348, and Shenzhen Basic Research Special Project (Natural Science Foundation) Basic Research (General Project), Nos. JCYJ20220530112814033 and JCYJ20240813094919026.} 


\subjclass[2020]{Primary 35R30; Secondary 60H15, 82C31}
 
\keywords{inversions of stochastic processes, ergodic invariant measure, stochastic ordinary and partial differential equations, Fokker-Planck equation}

\begin{abstract}
We introduce and analyze a novel class of inverse problems for stochastic dynamics: Given the ergodic invariant measure of a stochastic process governed by a nonlinear stochastic ordinary or partial differential equation (SODE or SPDE), we investigate the unique identifiability of the underlying process--specifically, the recovery of its drift and diffusion terms. This stands in contrast to the classical problem of statistical inference from trajectory data. 
 We establish unique identifiability results under several key scenarios, including cases with both multiplicative and additive noise, for both finite- and infinite-dimensional systems. Our analysis leverages the intrinsic structure of the governing equations and their quantitative relationship with the ergodic measure, thereby transforming the identifiability problem into a uniqueness issue for the solutions to the associated stationary Fokker-Planck equations. This approach reveals fundamental differences between drift and diffusion inversion problems and provides counterexamples where unique recovery fails.
This work lays the theoretical foundation for a new research direction with significant potential for practical application. 
\end{abstract}

\maketitle

\section{Introduction} 

Stochastic differential equations (SDEs), both SODEs and SPDEs, serve as fundamental mathematical models for phenomena across the sciences and engineering, from particle dynamics in physics to interest rate fluctuations in finance and pattern formation in biology. A profound feature of many such systems, particularly those with dissipative nonlinearities, is the existence and uniqueness of an \emph{ergodic (invariant) measure}. This measure characterizes the long-time, statistical equilibrium of the system, describing the distribution of the state process after transients have decayed. In essence, while individual trajectories remain unpredictable, the probability law of the process converges to this invariant measure, offering a powerful statistical description of the system's steady state.

The \emph{forward problem}---understanding the properties of the invariant measure given the governing equations (i.e., the drift and diffusion coefficients)---has been a central theme in stochastic analysis for decades. A vast body of literature is devoted to proving the existence, uniqueness, and regularity of invariant measures, as well as quantifying the rate of convergence to equilibrium; see e.g. \cite{BS20, DZ96, HM06, Liu25, Liu26, LL25, MSH02, Wan23} and the references cited therein. In parallel, the field of statistical inference for SDEs is highly developed, where the central task is to estimate the unknown parameters of the drift and diffusion terms from \emph{finite-time, noisy observational data} of the process trajectories; see e.g. \cite{DD22, LLM21, LW24, LLv25, LM23, LSW25, LW25, WH20} and the references cited therein. 

In this paper, we address a fundamentally different and, to our knowledge, largely unexplored \emph{inverse problem}. We posit a scenario where the \emph{ergodic measure} itself is known, or can be reliably estimated, and we ask: {Can one uniquely recover the underlying stochastic process---namely, its drift and diffusion terms---from this measure alone?} We term this the \emph{inversion of a stochastic process from its ergodic measure}.

This problem is motivated by several compelling considerations:
\begin{enumerate}
    \item \textbf{A New Data Paradigm:} In many complex systems, especially those at a macroscopic or thermodynamic scale, one may have access to statistical equilibrium data (e.g., from long-time averages or ensemble measurements) without direct access to the complete dynamical trajectories. Our formulation directly utilizes this equilibrium information as the primary data for inversion.
    \item \textbf{Theoretical Foundation:} Establishing identifiability---whether the map from the coefficients to the invariant measure is injective---is a prerequisite for any well-posed inverse problem. If two different drift-diffusion pairs can generate the same invariant measure, then any recovery attempt is inherently futile. Our work provides a rigorous foundation for this new class of problems.
    \item \textbf{Potential for Robustness:} Ergodic measures are global, time-averaged objects. An inversion methodology based on them could potentially be more robust to model misspecification and high-frequency noise than methods relying on precise trajectory data.
\end{enumerate}

Despite its natural motivation, this inverse problem presents significant challenges. The relationship between the coefficients of a nonlinear SDE and its invariant measure is highly implicit, governed by the so-called stationary Fokker-Planck equation. These are second-order elliptic equations in high or infinite dimensions, and the inversion is inherently ill-posed without suitable \emph{a priori} assumptions.

The main contributions of this paper are the establishment of {unique identifiability results} for this novel inverse problem. We derive sufficient conditions under which the knowledge of the ergodic measure guarantees the unique recovery of the unknown drift and diffusion terms. Our analysis covers several key scenarios: 
\begin{enumerate}
\item
Drift inversion: Unique recovery is possible for one-dimensional SODEs with multiplicative noise (see Theorem \ref{tm-b}), high-dimensional Langevin equations (i.e., with gradient drifts and additive noise in Theorem \ref{tm-lan-b}), and certain SPDEs with additive noise (see Theorem \ref{tm-spde}), but generally fails for high-dimensional SODEs without gradient structure, even in the additive noise case (we construct a counterexample in Theorem \ref{tm-b-i}).
 
\item
Diffusion inversion: Unique recovery is possible for any dimensional SODEs or certain SPDEs driven by additive noise (see Theorems \ref{tm-s}, \ref{tm-lan-s}, \ref{tm-s+}, \ref{tm-spde}), but generally fails in the multiplicative noise case, even for one-dimensional SODEs (we construct a counterexample in Theorem \ref{tm-s-i}), let alone SPDEs. 
\end{enumerate}

Our technical approach is rooted in a detailed analysis of the stationary Fokker-Planck equations. We transform the problem of identifiability into the problem of uniqueness of solutions to certain PDEs. The proofs rely on a careful exploitation of the quantitative relationships between the coefficients and the density of the invariant measure.

To the best of our knowledge, this work represents a new direction in research on inverse problems for stochastic dynamics. The established identifiability results provide the theoretical bedrock upon which future computational and statistical recovery algorithms can be built. We envision many potential developments of practical significance, including applications to model validation, uncertainty quantification for systems in equilibrium, and the design of stochastic processes with desired statistical properties.

The rest of this paper is organized as follows. Some preliminaries are given in Section \ref{sec2}.
In Section \ref{sec3}, we investigate the drift and diffusion inversion problem for nonlinear SODEs.
These ideas are generalized in Section \ref{sec4} to the infinite-dimensional case.

\section{Preliminaries}
 \label{sec2}

This section will investigate the drift and diffusion inversion problems for nonlinear SODEs driven by an $\rr^m$-valued Wiener process $W$ on a complete filtered probability space $(\Omega, \FFF, \mathbb{F}, \pp)$ (with $\mathbb{F}:=(\FFF(t))_{t\ge 0}$):
\begin{align}\label{sode}
    \dd X(t) = b(X(t))\dd t + \sigma(X(t)) \dd W(t), ~ t \ge 0, 
\end{align}
where $b: \rr^d \to \rr^d$ and $\sigma: \rr^d \to \rr^{d \times m}$ belongs to the space $\BB_b^\text{loc}$ of Borel measurable and locally bounded functions. 
Throughout, we denote by $\abs{\cdot}$ and $\<\cdot, \cdot\>$ the Euclidean norm and inner product in $\rr^d$ or the Hilbert space in Section \ref{sec4} if there is no confusion and by $\norm{\cdot}$ the Hilbert--Schmidt norm in $\rr^{d \times m}$.

\subsection{Fokker--Planck equation and Invariant measure}

Let us recall the invariant measure of Eq. \eqref{sode} and the connection to the Fokker--Planck equation.

Assume that Eq. \eqref{sode} has a unique strong solution which is a homogeneous Markov process (under broad assumptions on the coefficients of Eq. \eqref{sode}; see Example \ref{ex1}).
Denote by $P_t$ the associated Markov semigroup of Eq. \eqref{sode} on $\rr^d$, i.e.,
\begin{align} \label{df-pt}
P_tf(x):= \ee^x [f(X_t)], \quad t\ge 0, ~ f\in \BB_b(\rr^d), ~ x\in \rr^d,
\end{align}
where $\BB_b(\rr^d)$ is the class of all bounded measurable functions on $\rr^d$ and $\ee^x$ is the expectation for the solution $X_t$ of \eqref{spde} with $X_0=x$.  
A probability measure $\mu$ on $\BB(\rr^d)$ is called \emph{invariant} for (the solution $\{X_t\}_{t \ge 0}$) Eq. \eqref{sode} if 
\begin{align} \label{df-im}
\int_{\rr^d} P_t \phi(x) \mu({\rm d}x)=\mu(\phi):=\int_{\rr^d} \phi(x) \mu({\rm d}x),
\quad \forall~ \phi \in \BB_b(\rr^d), ~ t \ge 0.
\end{align} 
This is equivalent to $\mu(A)=\int_{\rr^d} P_t(x, A) \mu({\rm d}x)$ for all $A \in \BB(\rr^d)$,
where $\{P_t(x, \cdot)\}_{t \ge 0}$ denote the transition probabilities of $\{X_t\}_{t \ge 0}$ starting from $X_0=x \in \rr^d$; $\mu$ is also called a stationary distribution of $\{X_t\}_{t \ge 0}$ or Eq. \eqref{sode}.
An invariant (probability) measure $\mu$ of Eq. \eqref{sode} is called \emph{ergodic} if 
\begin{align} \label{df-erg}
\lim_{T \to \infty} \frac1T \int_0^\infty P_t \phi(x)=\mu(\phi) 
\quad \text{in}~ L^2(\rr^d; \mu),\quad \forall~ \phi \in L^2(\rr^d; \mu).
\end{align}
It is well-known that if $\{X_t\}_{t \ge 0}$ admits a unique invariant measure, then it is ergodic; in this case, we call it uniquely ergodic.

As $b^i$ and $D^{ij}$, $i,j=1,\cdots,d$, are locally bounded, the transition probabilities $\mu_t$ of Eq. \eqref{sode} at the time $t>0$ satisfy the following parabolic Fokker--Planck equation (Fokker-Planck equation) for measures (in the sense of generalized functions; see, e.g., \cite[Proposition 1.3.1]{BKRS15}):
\begin{align*}
\frac{\partial}{\partial_t} \mu_t
=L_{D,b} \mu_t
:= \partial_{x_i} \partial_{x_j} [D^{ij} \mu_t] - \partial_{x_i} [b^i \mu_t], 
\quad t>0.
\end{align*} 

Moreover, the invariant measure $\mu$ defined by \eqref{df-im} satisfies the following stationary Fokker-Planck equation for measures:
\begin{align*}
L_{D,b} \mu= \partial_{x_i} \partial_{x_j} [D^{ij} \mu] - \partial_{x_i} [b^i \mu]=0, 
\quad t>0.
\end{align*}  
Let $\mu$ have a positive density $p$.
Then the corresponding stationary Fokker-Planck equation for densities becomes 
\begin{align} \label{eq-Fokker-Planck}
\partial_{x_i} \partial_{x_j} [D^{ij} p]- \partial_{x_i} [b^i p]=0.  
\end{align}

\subsection{Measurement operator}

Let $D:=\sigma \sigma^\top$ be the diffusion tensor Eq. \eqref{sode} with elements
\begin{align} \label{df-D}
D^{ij}=\frac12 \sigma^{il} \sigma^{jl},
\quad i,j=1,2,\cdots,d,
\end{align}
where we always assume that the summation is taken over all repeated indices.
From Eq. \eqref{eq-Fokker-Planck} we observe that a pair of $(b, D)$ (even with different $\sigma: \rr^d \to \rr^{d \times m}$ generating $D$ through \eqref{df-D}) satisfies the same stationary Fokker-Planck equation \eqref{eq-Fokker-Planck} for densities.
 
Denote by $\AAA$ the set of assumptions on $(b, D) \in \BB_b^\text{loc}(\rr^d; \rr^d) \times \BB_b^\text{loc}(\rr^d; \rr^{d \times d})$ with $D=\sigma \sigma^\top/2$ being nondegenerate such that Eq. \eqref{sode} has a unique strong solution with a unique ergodic measure $\pi_{b, D} \in \PP(\rr^d):=\{\text{probabily measures on}~\rr^d\}$ having a positive density $p_{b, D} \in \DD(\rr^d):=\{\text{density functions on}~\rr^d\}$.
We note that $p_{b, D}$ is $\lambda${\rm -a.e.} uniquely determined by $\pi_{b, D}$, where $\lambda$ denotes the Lebesgue measure.
It is clear $\AAA$ is nonempty as this assumption is valid under broad conditions that include all the examples given in Sections \ref{sec2} and \ref{sec3}; see, e.g., \cite[Theorems 1.5.2, 3.1.2, and 6.3.1]{BKRS15}. 
Then for each pair of $(b, D) \in \AAA$, we obtain a solution $\pi_{b, D} \in \PP(\rr^d)$ with density $p_{b, D} \in \DD(\rr^d)\}$ and we can define the measurement operator 
\begin{align}
\begin{split}
T_{b, D}: \AAA & \longrightarrow p_{b, D}(\AAA) \subset \DD(\rr^d) \\
(b, D) &\longmapsto p_{b, D}=T_{b, D}(b, D).
\end{split}
\end{align}
Our main aim is to find the inversion of $b$ and/or $D$ from the ergodic measure $\pi_{b, D}$ of Eq. \eqref{sode}.

Later in Section \ref{sec2} (see Remark \ref{rk-bs}), we will observe that given an ergodic measure $\pi_{b, D}$, generally one cannot simultaneously invert the drift $b$ and diffusion tensor $D$.  
So we also consider the following measurable operators with a fixed drift or diffusion.
For a fixed $D \in \BB_b^\text{loc}(\rr^d; \rr^{d \times d})$, define
\begin{align}  \label{t-s}
\begin{split}
T_b: \AAA_b =\{b:~(b, D) \in \AAA\} & \longrightarrow T_b(\AAA_b) \subset \DD(\rr^d) \\
b & \longmapsto p_{b, D}=T_b(b, D),
\end{split}
\end{align} 
and for a fixed $b \in \BB_b^\text{loc}(\rr^d; \rr^d)$, define
\begin{align}  \label{t-b}
\begin{split}
T_D: \AAA_D =\{D:~(b, D) \in \AAA\} & \longrightarrow T_D(\AAA_D) \subset \DD(\rr^d) \\
D & \longmapsto p_{b, D}=T_D(b, D).
\end{split}
\end{align} 
It is clear that $T_b$ and $T_D$ are well-defined and highly nonlinear.

By the definitions of $\AAA_b$ and $\AAA_D$ in \eqref{t-s} and \eqref{t-b}, respectively, it is clear that $T_b: \AAA_b \to T_b(\AAA_b)$ and $T_D: \AAA_D \to T_D(\AAA_D)$ are surjective.
Now a natural and central question is whether $T_b: \AAA_b \to T_b(\AAA_b)$ and $T_D: \AAA_D \to T_D(\AAA_D)$ are injective?  
If they are injective (and thus bijective), are their inverses, denoted by $T_b^{-1}$ and $T_D^{-1}$, continuous?

\begin{que}
\begin{enumerate}
\item
For a fixed $\sigma$ such that $(b_k, D) \in \AAA$ with $k=1,2$, whether $p_{b_1, D}=p_{b_2, D}$ induces $b_1=b_2$ (equivalently, $T_b^{-1}$ exists)?
If so, is $T_b^{-1}$ continuous in the sense that $\lim_{p_{b_1, D} \to p_{b_2, D}} T_b^{-1}(p_{b_1, D})=T_b^{-1}(p_{b_2, D})$?

\item
For a fixed $b$ such that $(b, D_k) \in \AAA$ with $k=1,2$, whether $p_{b, D_1}=p_{b, D_2}$ if and only if $D_1=D_2$ (equivalently, $T_D^{-1}$ exists)?
If so, is $T_D^{-1}$ continuous in the sense that $\lim_{p_{b, D_1} \to p_{b, D_2}} T_D^{-1}(p_{b, D_1})=T_D^{-1}(p_{b, D_2})$?
\end{enumerate} 
\end{que}

\section{Drift and Diffusion Inversion Problem for SDEs}
 \label{sec3}

In this section, we study the drift inversion problem and the diffusion inversion problem with both multiplicative and additive noise, for both one-dimensional and high-dimensional SODEs.
Meanwhile, we give several examples to indicate the ill-posedness of both the drift inversion problem for high-dimensional SODEs, even driven by additive noise, and the diffusion problem for SODEs driven by multiplicative noise, even in one dimension.

\subsection{Drift inversion for SODEs}

We have the following bijectivity of $T_b$ in the one-dimensional case (i.e., $d=1$) driven by $1 \le m$-dimensional Wiener process.
In this case, the nondegenerate condition of $D$ reduces to 
$|\sigma(x)|>0$ for any $x \in \rr$.

\begin{tm}  \label{tm-b}
For a fixed $D=D_k \in \CC(\rr; \rr)$, assume that $(b_k, D_k) \in \AAA$ and 
\begin{align} \label{con-1}
\int_\rr \frac{e^{U_k(x)}}{D_k(x)} {\rm d}x < \infty,
\end{align}
where $U_k \in \CC^1(\rr; \rr)$ is any premitive of $b_k/D$, i.e., 
$U_k'=b_k /D$, $k=1,2$. 
Then $\pi_{b_1, D}=\pi_{b_2, D}$ if and only if $b_1=b_2$, i.e., $T_b$ is a bijective.
\end{tm}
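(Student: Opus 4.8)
The plan is to exploit the fact that in one dimension the stationary Fokker--Planck equation \eqref{eq-Fokker-Planck} can be integrated explicitly. With $d=1$ and $D=D_k$ fixed, \eqref{eq-Fokker-Planck} reads $\big(D p\big)'' - \big(b_k p\big)' = 0$, so there is a constant $c_k$ with $\big(D p_{b_k,D}\big)' - b_k p_{b_k,D} = c_k$. The first step is to argue that the constant of integration must vanish, $c_k=0$: since $p_{b_k,D}$ is a probability density on $\rr$ and $D p_{b_k,D}$, together with its derivative, is integrable (this is where the growth/integrability hypothesis \eqref{con-1} and $(b_k,D)\in\AAA$ enter), the flux $J_k := \big(D p_{b_k,D}\big)' - b_k p_{b_k,D}$ must tend to $0$ along a sequence, forcing $c_k=0$. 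This yields the first-order ODE $\big(D p_{b_k,D}\big)' = b_k p_{b_k,D} = (D U_k') p_{b_k,D}$.

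Next I would solve this ODE to obtain the explicit formula for the invariant density in terms of $b_k$ and $D$. Writing $q_k := D p_{b_k,D}$, the relation $q_k' = (b_k/D) q_k = U_k' q_k$ integrates to $q_k(x) = q_k(x_0)\, e^{U_k(x)-U_k(x_0)}$, hence
\begin{align*}
p_{b_k,D}(x) = \frac{C_k}{D(x)}\, e^{U_k(x)}, \qquad C_k^{-1} = \int_\rr \frac{e^{U_k(x)}}{D(x)}\,\dd x,
\end{align*}
where the normalization is finite precisely by \eqref{con-1}, and $C_k>0$ because $D>0$ (nondegeneracy in $d=1$) and the density is positive. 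This is the classical closed form for one-dimensional diffusions, but the point here is to derive it cleanly from membership in $\AAA$ plus \eqref{con-1} so that it applies to both $k=1$ and $k=2$.

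Finally I would run the uniqueness argument. The direction $b_1=b_2 \Rightarrow \pi_{b_1,D}=\pi_{b_2,D}$ is immediate from the formula (and well-posedness in $\AAA$). For the converse, suppose $\pi_{b_1,D}=\pi_{b_2,D}$, equivalently $p_{b_1,D}=p_{b_2,D}=:p$ $\lambda$-a.e.; since both sides are continuous (being $C_k e^{U_k}/D$ with $U_k\in\CC^1$, $D\in\CC$), they agree everywhere. Then $C_1 e^{U_1(x)}/D(x) = C_2 e^{U_2(x)}/D(x)$ for all $x$, so $e^{U_1(x)-U_2(x)} = C_2/C_1$ is constant, whence $U_1' = U_2'$ identically; multiplying by $D$ gives $b_1 = D U_1' = D U_2' = b_2$. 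Equivalently, one can avoid dividing by noting $(\log(Dp))' = b_k/D$ pointwise, so $b_k$ is determined by $p$ and $D$ via $b_k = D\,(\log(D p))'$, which directly exhibits $T_b^{-1}$.

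The main obstacle is the first step: justifying rigorously that the integration constant is zero, i.e., that the probability flux vanishes. This requires knowing that $D p_{b_k,D}$ and $b_k p_{b_k,D}$ are integrable (or at least that $D p_{b_k,D}$ is absolutely continuous with an $L^1$ derivative and tends to $0$ at $\pm\infty$), which is not automatic from $(b_k,D)\in\AAA$ alone and is the reason hypothesis \eqref{con-1} is imposed; one must trace through the regularity guaranteed by \cite{BKRS15} for stationary Fokker--Planck solutions with locally bounded coefficients to close this gap. Everything after that is essentially the textbook computation for scalar diffusions.
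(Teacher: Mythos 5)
Your proposal is correct and follows essentially the same route as the paper: integrate the one-dimensional stationary Fokker--Planck equation $(Dp)''-(bp)'=0$ to obtain the explicit density $p_{b,D}=C\,e^{U}/D$ with $C^{-1}=\int_\rr e^{U}/D\,\dd x$, then observe that equality of the densities forces $e^{U_1-U_2}$ to be constant, hence $U_1'=U_2'$ and $b_1=b_2$. The only (immaterial) difference is how the integration constant is dispatched: you argue via vanishing of the probability flux at infinity, while the paper writes the general solution of the second-order ODE and uses the normalization $\int_\rr p\,\dd x=1$ to conclude the extra constant is zero.
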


\begin{proof}   
In the case of $d=1$, the stationary Fokker-Planck equation \eqref{eq-Fokker-Planck} reduces to the following linear ODE with variable coefficients:
\begin{align}   \label{eq-den}
(D p)''- (b p)' =0.
\end{align}   
Solving the above equation \eqref{eq-den}, we obtain its general solution
\begin{align*}
p(x) & =\frac{C}{2 D(x)} \int_{x_0}^x
 \exp\Big(\int_y^x \frac{b}{D} {\rm d}z \Big) {\rm d}y 
 + \frac{D(x_0) p(x_0)}{D(x)} \exp\Big(\int_{x_0}^x \frac{b}{D}  {\rm d}z \Big), 
~ x \in \rr,
 \end{align*}
for any $x_0 \in \rr$ and any constant $C \in \rr$.
Here and after, we omit the integral variable to lighten the notation.
 The requirement $\int_{-\infty}^\infty p(x) {\rm d}x=1$ implies that $C=0$.
Consequently, $\pi_{b, D}(dx)=p_{b, D}(x) dx$ with  
 \begin{align} \label{den}
p_{b, D}(x)=\frac{C}{D(x)} \exp\Big(\int_{x_0}^x \frac{b}{D}  {\rm d}y \Big), \quad x \in \rr,
 \end{align}
for any $x_0 \in \rr$ and $C=D(x_0) p(x_0)>0$ (satisfying $\int_{-\infty}^\infty p(x) {\rm d}x=1$) is the unique invariant measures of Eq. \eqref{sode}, provided \eqref{con-1} holds.

Let $U$ be any primitive of $b/D$.
Then the solution of Eq. \eqref{eq-Fokker-Planck} is    
 \begin{align*}
p_{b, D}(x)=C e^{-U(x_0)}\frac{e^{U(x)}}{D(x)}, \quad x \in \rr,
 \end{align*}
for any $x_0 \in \rr$ and $C = D(x_0) p(x_0)=e^{U(x_0)}(\int_\rr e^U/D {\rm d}x )^{-1}>0$ (satisfying $\int_{-\infty}^\infty p(x) {\rm d}x=1$).
We can rewrite it as 
\begin{align} \label{inv}
p_{b, D}(x)=\frac{e^{U(x)}}{D(x)} \Big(\int_\rr \frac{e^{U}}{D} {\rm d}y \Big)^{-1}, \quad x \in \rr,
 \end{align}
 
 Now let $D_1=D_2=D$ and $\pi_{b_1, D}=\pi_{b_2, D}$, we have the existence of a constant $C$ such that  
\begin{align*}
\frac{e^{U_1(x)}}{D(x)} =C \cdot \frac{e^{U_2(x)}}{D(x)},  
 \end{align*}
 from which we obtain 
 \begin{align*}
U_1(x)-U_2(x)=C.
 \end{align*}
 Differentiating on both sides, we obtain $ b_1 = b_2$, and thereby prove the injectivity of $T_b$.
\end{proof}

\begin{rk} \label{rk-bs}
From the representation \eqref{inv}, even in the one-dimensional case, there exist many pairs $(b_k, D_k) \in \AAA$ that generalize the same ergodic measure, so that one can not invert both the drift and diffusion from a given ergodic measure simultaneously.
\end{rk}

The following result shows that, different from the one-dimensional case, $T_b$ is generally not injective for the high-dimensional SODEs, even in the additive noise case.

\begin{tm}  \label{tm-b-i}
For a fixed $D=D_k \in \CC(\rr^d; \rr^{d \times d})$ with $d \ge 2$, assume that $(b_k, D_k) \in \AAA$ satisfy \eqref{con-1} with $U_k'=b_k /D$, $k=1,2$.  
Then $\pi_{b_1, D}=\pi_{b_2, D}$ if and only if there exists a continuously differentiable function $\psi: \rr^d \to \rr^d$ such that 
$(b_1-b_2) p=\nabla \times \psi$.
In particular, there exist $b_1 \neq b_2$, together with a fixed $D \in \rr^{d \times d}$, such that $\pi_{b_1, D}=\pi_{b_2, D}$, thus $T_b$ is not injective. 
\end{tm}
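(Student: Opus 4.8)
The plan is to start from the stationary Fokker--Planck equation \eqref{eq-Fokker-Planck}. With $D$ fixed, if $\pi_{b_1,D}=\pi_{b_2,D}=:\pi$ with common positive density $p$, then subtracting the two equations $\partial_{x_i}\partial_{x_j}[D^{ij}p]-\partial_{x_i}[b_k^i p]=0$ for $k=1,2$ kills the second-order term and leaves $\partial_{x_i}[(b_1^i-b_2^i)p]=0$, i.e. the vector field $v:=(b_1-b_2)p$ is divergence-free on $\rr^d$ in the distributional sense. The first step is therefore to record this reduction carefully and to note that, since $b_k$ and $D$ are continuous and $p$ is (by the regularity theory for stationary Fokker--Planck equations, e.g. \cite{BKRS15}) continuously differentiable and positive, $v$ is a genuine $C^1$ divergence-free vector field. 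The second step is the converse: if $v=(b_1-b_2)p$ equals $\nabla\times\psi$ for some $C^1$ field $\psi$ (in $d\ge 3$; in $d=2$ one writes $v=(\partial_{x_2}\psi,-\partial_{x_1}\psi)$ for a scalar $\psi$, i.e. a perpendicular gradient), then $\operatorname{div}v=0$ automatically, so $b_2=b_1-v/p$ satisfies the same stationary Fokker--Planck equation as $b_1$, hence produces the same density; one must also check $(b_2,D)\in\AAA$, which holds provided $\psi$ is chosen small enough (in a suitable norm) that the perturbed drift still yields a unique strong solution with a unique ergodic measure --- this is where one invokes that $\AAA$ is stable under small smooth perturbations of the drift, as guaranteed by the broad conditions cited after the definition of $\AAA$.

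The third and most concrete step is to exhibit an explicit counterexample showing non-injectivity. Here I would take the additive-noise case $D=\tfrac12 I_d$ and the Ornstein--Uhlenbeck-type drift $b_1(x)=-x$, whose ergodic measure is the standard Gaussian with density $p(x)=(2\pi)^{-d/2}e^{-|x|^2/2}$. It then suffices to produce a nonzero $C^1$ divergence-free vector field $v$ with $v/p$ bounded (so that $b_2:=b_1-v/p=-x-v(x)/p(x)$ is still a locally bounded, indeed smooth, drift that is confining at infinity). A clean choice in $d=2$ is $v(x)=p(x)\,(x_2,-x_1)=p(x)\,x^\perp$, so that $b_2(x)=(-x_1-x_2,\ -x_2+x_1)=Ax$ with $A=\begin{pmatrix}-1&-1\\ 1&-1\end{pmatrix}$; one checks $\operatorname{div}(p\,x^\perp)=\nabla p\cdot x^\perp=0$ since $\nabla p\parallel x$, and that the linear SDE $\dd X=AX\,\dd t+\dd W$ with $A+A^\tr=-2I$ has the standard Gaussian as its unique invariant measure (it is the classical fact that a linear SDE with Hurwitz $A$ has Gaussian invariant measure solving the Lyapunov equation $AC+CA^\tr=-I$, and $C=\tfrac12 I$ works). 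Thus $b_1=-x\ne Ax=b_2$ but $\pi_{b_1,D}=\pi_{b_2,D}$. For general $d\ge 2$ one embeds this two-dimensional example, or more conceptually uses any skew-symmetric $J$ with the drift $b_2(x)=(-I+J)x$, whose invariant Gaussian still has covariance $\tfrac12 I$.

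The main obstacle I anticipate is not the algebra but the bookkeeping around regularity and membership in $\AAA$: one must make sure that (i) the density $p$ of the common invariant measure is regular enough that "$\operatorname{div}v=0$ distributionally" can be upgraded to the stated $C^1$ representation $v=\nabla\times\psi$ --- on $\rr^d$ this is just the Poincar\'e lemma for closed forms, but one should note that a divergence-free field need not be a curl without a decay/cohomology hypothesis, so the theorem statement implicitly builds the potential $\psi$ into the characterization; and (ii) that the perturbed pair $(b_2,D)$ genuinely lies in $\AAA$, i.e. still admits a \emph{unique} strong solution and a \emph{unique} ergodic measure. For the explicit linear counterexample (ii) is immediate from classical Ornstein--Uhlenbeck theory, so the safest exposition is to prove the "if and only if" at the level of the PDE (where the only real content is the distributional identity $\operatorname{div}[(b_1-b_2)p]=0$ and its reformulation), and then to justify the final "in particular" clause purely through the self-contained linear example, thereby sidestepping any delicate perturbation argument for general $\psi$.
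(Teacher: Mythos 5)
Your proposal is correct and follows essentially the same route as the paper: subtract the two stationary Fokker--Planck equations to reduce to $\nabla\cdot[(b_1-b_2)p]=0$, represent the divergence-free field via a Helmholtz/Poincar\'e-type potential for the equivalence, and break injectivity with a skew-symmetric perturbation $b_2=(-I+J)x$ of an Ornstein--Uhlenbeck drift (the paper writes this as $b_2=b_1-J\nabla\ln p$ for the Langevin equation and checks membership in $\AAA$ through the conditions of Example \ref{ex1}, while you verify it via the Lyapunov equation --- a harmless difference). One small slip: with $D=\tfrac12 I_d$ and $b_1(x)=-x$ the invariant Gaussian has covariance $\tfrac12 I_d$ (density proportional to $e^{-|x|^2}$), not the standard Gaussian, but this does not affect your argument since you only use the radial symmetry of $p$ and the identity $A+A^\top=-2I$.
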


\begin{proof} 
It follows from the stationary Fokker-Planck equation \eqref{eq-Fokker-Planck} for densities with $D_1=D_2=D$ that $\pi_{b_1, D}=\pi_{b_2, D}$ if and only if 
\begin{align}  \label{eq-b}  
\nabla \cdot [(b_1-b_2) p]=0.
\end{align} 
When $d \ge 2$, by Helmholtz decomposition theorem (i.e., the space of vector fields can be decomposed into the range of the gradient and the kernel of the divergence), there exists a continuously differentiable function $\psi: \rr^d \to \rr^d$ such that 
\begin{align*}
(b_1-b_2) p=\nabla \times \psi.
\end{align*}  

Finally, we construct different $b_k$' so that $\pi_{b_1, D}=\pi_{b_2, D}$.
Let $\pi_{b_1, D}(x)=p(dx)$ be the unique ergodic measure of Eq. \eqref{sode} with coefficients $(b_1, D) \in \AAA$. 
We take a skew-smmetric constant matrix $J=(J^{ij}) \in \rr^{d \times d}$ (i.e., $J=-J^\top$) and 
\begin{align*}
b_2:=b_1- J \nabla \ln p.
\end{align*}
Then, direct calculations yield that 
\begin{align*}
\nabla \cdot (b_1-b_2) p=\nabla \cdot [J (\nabla \ln p) p]
=\nabla \cdot [J \nabla p]=J^{ij} \partial_{x_i} \partial_{x_j} p
=0,
\end{align*}  
by the skew-symmetry of $J$, provided $p$ is twice-differentiable, which is valid for a broad range of assumptions.

To ensure the (well-posedness and) unique ergodicity of the above counterexample, for example, we consider the Langevin equation \eqref{eq-lan} with density \eqref{lan-d} and $U \in \CC^2(\rr^d, \rr)$.
Suppose that $b_1(x)=\nabla U(x)=-\alpha x$ with $\alpha>0$ and $\sigma=\sqrt{\beta} {\rm Id}_d$.
It is clear that $(b_1, \sigma_1)$ satisfy all the conditions of Example \ref{ex1}.
By elementary calculations, we obtain that $b_2:=b_1-J \nabla \ln p=b_1-J b_1$ and the same $\sigma=\sqrt{\beta} {\rm Id}_d$ (so that the nondegenerate condition \eqref{non} holds) satisfy all the conditions of Example \ref{ex1}, with possibly different constants $L_i$, $i=0,1,\cdots,4$.
Indeed, as $\sigma$ is constant and $J$ is a skew-smmetric constant matrix implying $\<Jx, x\>=0$ for any $x \in \rr^d$, we have 
\begin{align*} 
& 2 \<b_2(x)-b_2(y), x-y\>+ \|\sigma(x)-\sigma(y)\|^2 \\
& =2 \<b_1(x)-b_1(y), x-y\> - 2 \<J (b_1(x)-b_1(y)), x-y\> \\
& = 2 \<b_1(x)-b_1(y), x-y\>
\le L_0|x-y|^2,  
\end{align*}   
which shows \eqref{mon-} with the same constant $L_0$.
A similar idea yields \eqref{coe-} and \eqref{pol-} with the same constants $L_1, L_2$ and different $L_4$ with $q=1$ (and $L_3=0$).  

This indicates that $\pi_{b_1, D}(x)= p(dx)$ is also the unique ergodic measure of Eq. \eqref{sode} with coefficients $(b_2, D)$, thus $T_b$ is not injective. 
\end{proof}

From Theorem \ref{tm-b-i}, it is known that $T_b$ is generally not injective in the high-dimensional, multiplicative noise case; the following result shows that they are injective for the following $d$-dimensional Langevin equation driven by $m=d$-dimensional Wiener process $W$:
\begin{align}\label{eq-lan}
    \dd X(t) =\nabla U(X(t)) \dd t + \sqrt{\beta} \dd W(t), ~ t \ge 0,  
\end{align}
where $\beta>0$ is a constant and $U: \rr^d \to \rr$ is Borel measurable. 
This corresponds to Eq. \eqref{sode} with $b=\nabla U$ and $\sigma=\sqrt \beta {\rm Id}_d$, where ${\rm Id}_d$ is the $d$-dimensional unit matrix, so that $D$ given in \eqref{df-D} is the constant $\beta/2$ (or $D=\frac12 \beta {\rm Id}_d$). Thus, the stationary Fokker-Planck equation \eqref{eq-Fokker-Planck} for densities reduces to
\begin{align} \label{eq-Fokker-Planck-d}
\frac\beta 2 \Delta p - \nabla \cdot [b p]=0.  
\end{align}  

From now on, all the invariant measure $\pi_{b, D}$, invariant density $p_{b, D}$, and measurement operator $T_{b, D}$, are denoted by $\pi_{b, \beta}$, $p_{b, \beta}$, and $T_{b, \beta}$, respectively, in the Langevin equation case.
By direct calculations, the following invariant density, called the Gibbs density, satisfies Eq. \eqref{eq-Fokker-Planck-d} (see also in, e.g., \cite[Theorem 4.1.11]{BKRS15}):
\begin{align}\label{lan-d}
p_{b, \beta}(x)=e^{\frac{2U(x)}{\beta}} \Big(\int_{\rr^d} e^{\frac{2U(x)}{\beta}}{\rm d}x \Big)^{-1}, 
\quad x \in \rr^d,
\end{align} 
provided the following $d$-dimensional similar integrability condition as \eqref{con-1} holds:
\begin{align}\label{con-d}
\int_{\rr^d} e^{\frac{2U(x)}{\beta}}{\rm d}x<\infty.
\end{align}
Therefore, $\pi_{b, \beta}(\dd x)= p_{b, \beta}(x) \dd x$, with $p_{b, \beta}$ given above, is the unique ergodic measure of the $d$-dimensional Langevin equation \eqref{lan-d}, provided \eqref{con-d} holds.

\begin{tm} \label{tm-lan-b}
For a fixed $\beta>0$, assume that $(b, \beta) \in \AAA$ satisfying \eqref{con-d} for any premitive $U \in \CC^1(\rr^d; \rr)$ of $b$. 
Then $T_b$ is bijective and its inverse $T_b^{-1}$ satisfy
\begin{align} 
T_b^{-1}(p_{b, \beta})
(=b) & =\frac\beta2 \nabla \ln p_{b, \beta}, \label{b-p}  \\
|T_b^{-1}(p_{b_1, \beta})-T_b^{-1}(p_{b_2, \beta})| 
(=|b_1-b_2|)
& =\frac\beta2  |\nabla \ln p_{b_1, \beta}-\nabla \ln p_{b_2, \beta}|.  \label{sta-b}
\end{align}    
\end{tm}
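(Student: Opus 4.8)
The plan is to exploit the explicit Gibbs form \eqref{lan-d} of the invariant density, which reduces the inversion to a single differentiation. Since $(b,\beta)\in\AAA$ with $b=\nabla U$, $\sigma=\sqrt\beta\,{\rm Id}_d$, and the integrability condition \eqref{con-d} holds, the unique invariant density is $p_{b,\beta}(x)=e^{2U(x)/\beta}\big(\int_{\rr^d}e^{2U/\beta}\,\dd x\big)^{-1}$. Taking logarithms gives $\ln p_{b,\beta}(x)=\tfrac{2}{\beta}U(x)-\ln\!\big(\int_{\rr^d}e^{2U/\beta}\,\dd x\big)$, and since the normalizing constant is independent of $x$, differentiating in $x$ yields $\nabla\ln p_{b,\beta}=\tfrac{2}{\beta}\nabla U=\tfrac{2}{\beta}\,b$, which is precisely the inversion formula \eqref{b-p}. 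I would note here that $p_{b,\beta}$ is positive (membership in $\AAA$) and of class $\CC^1$ (since $U\in\CC^1$), so that $\nabla\ln p_{b,\beta}$ is well defined, and that the recovered $b$ is independent of the choice of primitive $U$, because two primitives of $b$ differ by an additive constant that is absorbed into the normalizer (this also makes \eqref{con-d} an unambiguous hypothesis).

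From \eqref{b-p} the injectivity of $T_b$ is immediate: if $p_{b_1,\beta}=p_{b_2,\beta}$ with $(b_k,\beta)\in\AAA$, then $b_1=\tfrac{\beta}{2}\nabla\ln p_{b_1,\beta}=\tfrac{\beta}{2}\nabla\ln p_{b_2,\beta}=b_2$. Together with the surjectivity of $T_b:\AAA_b\to T_b(\AAA_b)$, which holds by the very definition of the codomain as the image, this gives that $T_b$ is bijective, with inverse $p_{b,\beta}\mapsto\tfrac{\beta}{2}\nabla\ln p_{b,\beta}$. Applying \eqref{b-p} to $b_1$ and $b_2$ and subtracting yields the pointwise identity $|b_1-b_2|=\tfrac{\beta}{2}\,|\nabla\ln p_{b_1,\beta}-\nabla\ln p_{b_2,\beta}|$, which is \eqref{sta-b}; in particular $T_b^{-1}$ is continuous (indeed an isometry up to the factor $\beta/2$) with respect to any reasonable topology on log-gradients.

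I do not expect a genuine analytic obstacle once \eqref{lan-d} is available; the real content of the statement is the structural fact, recorded just above it, that the Langevin form $b=\nabla U$ with additive noise forces the invariant density into the explicit exponential shape --- in sharp contrast to the general high-dimensional situation of Theorem \ref{tm-b-i}, where the divergence-free ambiguity $(b_1-b_2)\,p=\nabla\times\psi$ already defeats injectivity. The only points deserving care are: (i) checking that the hypotheses $(b,\beta)\in\AAA$, \eqref{con-d}, and $U\in\CC^1$ do place us in the regime where \eqref{lan-d} holds and $p_{b,\beta}\in\CC^1$ is strictly positive; and (ii) observing that \eqref{sta-b} is a pointwise identity, so no separate continuity argument for $T_b^{-1}$ is needed beyond reading it off.
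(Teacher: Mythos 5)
Your proposal is correct and follows essentially the same route as the paper: take logarithms in the Gibbs representation \eqref{lan-d}, so that $\ln p_{b,\beta}=\tfrac{2}{\beta}U+C$, and differentiate to obtain \eqref{b-p}, with \eqref{sta-b} and injectivity then read off by subtraction. Your added remarks on the independence of the choice of primitive and on positivity/regularity of $p_{b,\beta}$ are just explicit versions of what the paper leaves implicit.
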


\begin{proof} 
It follows from the representation \eqref{lan-d} that 
\begin{align*}
\ln p(x)= \frac{2U(x)}{\beta} +C, 
\quad x \in \rr^d,
\end{align*} 
from which we derive \eqref{b-p} and \eqref{sta-b}. 
\end{proof}

\subsection{Diffusion inversion for SODEs}

For the one-dimensional diffusion inversion problem, from the representation \eqref{inv} of the density, i.e., the explicit solution of Eq. \eqref{eq-Fokker-Planck}, we derive a gauge equivalence condition for $\sigma$ such that $T_D$ is injective.
This particularly indicates that $T_D$ is bijective in the case of additive noise. 
In contrast, it is not surjective, in the usual sense, for the multiplicative noise case.

\begin{tm} \label{tm-s} 
For a fixed $b=b_k \in \CC(\rr; \rr)$, assume that $(b_k, D_k) \in \AAA$ satisfing \eqref{con-1} for any premitive $U_k \in \CC^1(\rr; \rr)$ of $b/D_k$, $k=1,2$.
Then $\pi_{b, D_1}=\pi_{b, D_2}$ if and only if
\begin{align} \label{inv-s}
\frac{D_1(x)}{D_2(x)}
=1 + \frac{D_1(x_0)-D_2(x_0)}{D_2(x_0) \exp(\int_{x_0}^x b/D_2 {\rm d}y)},
\quad \forall ~ x, x_0 \in \rr.
 \end{align}   
 In particular, for the additive noise case, $\pi_{b, D_1}=\pi_{b, D_2}$ if and only if $D_1= D_2$, i.e., $T_D$ is a bijective. 
\end{tm}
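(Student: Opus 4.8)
The plan is to work entirely from the explicit one-dimensional density formula \eqref{inv}, which the proof of Theorem \ref{tm-b} already establishes: for any admissible pair $(b,D)$ satisfying the integrability condition \eqref{con-1}, the unique invariant density is
\begin{align*}
p_{b,D}(x)=\frac{e^{U(x)}}{D(x)}\Big(\int_\rr \frac{e^{U}}{D}\,{\rm d}y\Big)^{-1},
\qquad U'=b/D.
\end{align*}
The key observation is that $\pi_{b,D_1}=\pi_{b,D_2}$ means exactly $p_{b,D_1}=p_{b,D_2}$ pointwise (since each measure has a positive continuous density, it is determined $\lambda$-a.e., hence everywhere by continuity). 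So I first write out the equality $p_{b,D_1}(x)=p_{b,D_2}(x)$ using the formula above with the \emph{same} $b$ but with primitives $U_1,U_2$ of $b/D_1,b/D_2$ respectively; after cancelling the normalizing constants into a single unknown positive constant, this reads $e^{U_1(x)}/D_1(x)=\kappa\,e^{U_2(x)}/D_2(x)$ for all $x$, for some $\kappa>0$.

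Next I would solve this for the ratio $D_1/D_2$. Fix a reference point $x_0$. Introduce $h(x):=D_1(x)/D_2(x)$; then $U_1'-U_2'=(b/D_1)-(b/D_2)=b(1/D_1-1/D_2)$, which I want to relate to $h$ and to $b/D_2$. The cleanest route is probably to differentiate the logarithm of the relation $e^{U_1}/D_1=\kappa e^{U_2}/D_2$, obtaining $U_1'-U_2'=(\ln D_1)'-(\ln D_2)'=(\ln h)'$, i.e. $b/D_1-b/D_2=h'/h$. Writing $b/D_1=(b/D_2)/h$ turns this into a first-order linear ODE for $h$ driven by $b/D_2$: $h'=(b/D_2)(1/h)\cdot h-(b/D_2)h/h\cdots$ — more precisely $h'/h=(b/D_2)(1/h-1)$, so $h'=(b/D_2)(1-h)$. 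This separable/linear ODE integrates to $1-h(x)=(1-h(x_0))\exp\!\big(-\int_{x_0}^x (b/D_2)\,{\rm d}y\big)$, which after rearranging $h(x_0)=D_1(x_0)/D_2(x_0)$ is precisely \eqref{inv-s}. Conversely, if \eqref{inv-s} holds then one checks it forces $e^{U_1}/D_1$ and $e^{U_2}/D_2$ to be proportional, hence (after normalization, using \eqref{con-1} for both $k$) $p_{b,D_1}=p_{b,D_2}$; this direction is a direct substitution.

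For the additive-noise consequence: if both $D_1$ and $D_2$ are constants, say $D_k(x)\equiv d_k>0$, then the right-hand side of \eqref{inv-s} is the constant function $1+(d_1-d_2)/d_2=d_1/d_2$, while the left-hand side $D_1/D_2$ is also $d_1/d_2$, so \eqref{inv-s} holds automatically and contains no information — I must instead argue directly. Plugging constant $D_1,D_2$ into the density formula gives $p_{b,d_k}(x)=c_k e^{U_k(x)}$ with $U_k'=b/d_k$; equality of densities forces $U_1(x)-U_2(x)=\mathrm{const}$, hence $b/d_1=U_1'=U_2'=b/d_2$ wherever $b\ne 0$. Since $b$ is continuous and (to be a genuine recovery problem) not identically zero on any interval — more carefully, since $(b,d_k)\in\AAA$ forces a nontrivial drift for the integrability \eqref{con-1} to be compatible with $\int p=1$ — we get $d_1=d_2$. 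I would state the additive case as: if $b\not\equiv 0$ then $D_1=D_2$; and note that if $b\equiv 0$ the invariant density cannot be integrable anyway, so this case is vacuous within $\AAA$.

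The main obstacle is the subtlety in the additive-noise conclusion: the general criterion \eqref{inv-s} degenerates to a tautology for constant diffusions, so the "in particular" is not a formal corollary of the displayed equivalence but needs the separate (short) argument above, and one must be slightly careful about the set where $b$ vanishes. A secondary technical point is justifying the reduction $\pi_{b,D_1}=\pi_{b,D_2}\iff p_{b,D_1}=p_{b,D_2}$ pointwise and the passage between the two multiplicative constants (the normalizing constant and $\kappa$); both are routine given continuity and positivity of the densities, but should be spelled out. The ODE manipulation itself is elementary once set up correctly.
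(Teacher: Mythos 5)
Your derivation of \eqref{inv-s} is correct and is essentially the paper's argument in different variables: the paper writes the equality of the densities \eqref{den} as \eqref{mul-den-s}, turns it into the first-order linear ODE $g'+(f'/f)g=b$ for $g=D_1$ and solves it using the particular solution $g_1=D_2$, whereas you pass to the ratio $h=D_1/D_2$ and obtain the equivalent ODE $h'=(b/D_2)(1-h)$; integrating either one gives \eqref{inv-s}. Your observation that $h=\kappa^{-1}e^{U_1-U_2}$ is automatically $\CC^1$ (so the differentiation is legitimate under the stated hypotheses) is a nice touch, and your converse-by-substitution and the reduction of $\pi_{b,D_1}=\pi_{b,D_2}$ to pointwise equality of the continuous positive densities are fine.

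The one error is your claim that for constant diffusions $D_k\equiv d_k$ the right-hand side of \eqref{inv-s} equals the constant $1+(d_1-d_2)/d_2$, so that \eqref{inv-s} ``holds automatically and contains no information.'' That is a misreading: the denominator contains $\exp\big(\int_{x_0}^x b/D_2\,{\rm d}y\big)$, which depends on $x$. Consequently, if $d_1\neq d_2$, demanding \eqref{inv-s} for all $x$ forces $\exp\big(\tfrac1{d_2}\int_{x_0}^x b\,{\rm d}y\big)\equiv 1$, i.e.\ $b\equiv 0$, which is ruled out because then \eqref{con-1} fails and no invariant probability density exists; this is precisely how the paper deduces the additive-noise statement from \eqref{inv-s}, so the ``in particular'' is a formal corollary after all. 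Your substitute argument for the additive case (constant diffusions give $p=c_k e^{U_k}$, equality forces $U_1-U_2$ constant, hence $b(1/d_1-1/d_2)\equiv 0$, and $b\equiv 0$ is incompatible with membership in $\AAA$) is correct and reaches the same contradiction, so your proof has no gap; only the remark about \eqref{inv-s} degenerating to a tautology should be removed.
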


\begin{proof} 
Let $\pi_{b, D_1}=\pi_{b, D_2}$.
By the representation \eqref{den} with any $x_0 \in \rr$, we have the existence of a positive constant $C=C(x_0)$ s.t.
\begin{align} \label{mul-den-s}
D_1(x)^{-1} \exp\Big(\int_{x_0}^x  \frac{b}{D_1}{\rm d}y\Big) 
=C D_2(x)^{-1} \exp\Big(\int_{x_0}^x \frac{b}{D_2} {\rm d}y\Big). 
 \end{align}
 Denote by $f(x):=C D_2(x)^{-1} \exp(\int_{x_0}^x b/D_2 {\rm d}y)$.
 It is equivalent to finding $D_1$ from 
 \begin{align*}
D_1(x)^{-1} \exp\Big(\int_{x_0}^x  \frac{b}{D_1}{\rm d}y\Big)
=f(x), \quad \lambda{\rm -a.e.} 
 \end{align*}
 This equation can be rewritten as the following one-order linear ODE with variable coefficients by setting $g:=D_1$:
 \begin{align} \label{ode-s}
g'(x) + \frac{f'(x)}{f(x)} g(x) = b(x).
 \end{align} 
 
By \eqref{mul-den-s}, it is known that the above linear equation, with 
$$\frac{f'(x)}{f(x)}= \frac{b-D_2'}{D_2},$$ 
has a solution given by $ g_1 = D_2$.
Moreover, its general solution is 
\begin{align*}
g(x) & =\int_{x_0}^x \exp\Big(\int_y^x -\frac{f'}{f} {\rm d}z\Big) b(y) {\rm d}y 
+ g(x_0) \exp\Big(\int_{x_0}^x -\frac{f'}{f} {\rm d}z\Big) \\
& = \frac1{f(x)} \int_{x_0}^x f b {\rm d}y 
+ g(x_0) \frac{f(x_0)}{f(x)} \\ 
& =D_2(x) \Big(1 + \frac{g(x_0)-D_2(x_0)}{D_2(x_0) \exp(\int_{x_0}^x b/D_2 {\rm d}y)}\Big),
 \end{align*}
 for any $x_0 \in \rr$, i.e., \eqref{inv-s} holds.
 This shows that $D_1 \equiv D_2$ if and only if $D_1(x_0)=D_2(x_0)$ for some point $x_0 \in \rr$.
 
In the additive noise case, suppose that $D_1 \neq D_2$.
From \eqref{inv-s} we have 
\begin{align*}
\exp\Big(\frac1{D_2} \int_{x_0}^x b {\rm d}y\Big)
\equiv \frac{D_1-D_2}{(D_1/D_2-1) D_2} = 1,
\quad \forall ~ x_0 \in \rr.
 \end{align*}   
Then we derive $b \equiv 0$, but this is impossible since, in this case, Eq. \eqref{sode} is not uniquely ergodic. 
Therefore, $D_1 = D_2$, and thereby prove the bijectivity of $T_D$. 
\end{proof}

The following result shows that, different from the additive noise case, $T_D$ is generally not injective for SODEs driven by multiplicative noise, even in one dimension.

\begin{tm} \label{tm-s-i}  
There exist $D_1 \neq D_2$ which are not constant simultaneously, i.e., there do not exist constant vectors $(C_1, C_2) \in \rr^2$ such that $(D_1(x), D_2(x))=(C_1, C_2)$ for all $x \in \rr$, together with a fixed $b \in \CC(\rr; \rr)$, such that $\pi_{b, D_1}=\pi_{b, D_2}$, thus $T_D$ is not injective. 
\end{tm}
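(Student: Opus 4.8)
The plan is to exploit the explicit one--dimensional representation \eqref{den}: once the drift $b$ is fixed, the diffusion coefficient enters the stationary Fokker--Planck equation \eqref{eq-den} only through the relation $(D p)'=b p$, the additive constant being killed by $\int_\rr p=1$. Consequently, if $b\in\CC(\rr;\rr)$ and a positive probability density $p$ satisfy $(D p)'=bp$ for one admissible $D$, then for \emph{every} constant $C>0$ the function $D_C$ determined by
\begin{align*}
D_C(x)\,p(x)=C+\int_{x_0}^x b(y)\,p(y)\,\dd y
\end{align*}
again solves \eqref{eq-den} with the \emph{same} density $p$ (equivalently, any two members of the family $\{D_C\}_{C>0}$ are related by the gauge identity \eqref{inv-s}). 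So it suffices to exhibit one pair $(b,p)$ for which at least two distinct, not--both--constant members of this family belong to $\AAA_b$.

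I would take $b(x)=-x$ and $p(x)=(2\pi)^{-1/2}e^{-x^2/2}$. A one--line computation gives $D_C(x)\,p(x)=(2\pi)^{-1/2}(e^{-x^2/2}+c)$ with $c:=\sqrt{2\pi}\,C-1$, hence
\begin{align*}
D_C(x)=1+c\,e^{x^2/2},\qquad (D_C p)'(x)=-x\,p(x)=b(x)\,p(x),
\end{align*}
so \eqref{eq-den} holds, and $p$ being a probability density is exactly the integrability condition \eqref{con-1} for a primitive $U_C$ of $b/D_C$. For every $c\ge 0$ one has $D_C\ge 1>0$, so $D_C$ is continuous, nondegenerate and locally bounded. (If one prefers a diffusion of at most linear growth, the parallel computation with the Cauchy density $p(x)=\pi^{-1}(1+x^2)^{-1}$ and $b(x)=-2x/(1+x^2)$ yields $D_C(x)=ax^2+(a+1)$ with $a:=\pi C-1\ge 0$, and $b$ is then bounded.)

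It remains to verify $(b,D_C)\in\AAA$. Since $b$ and $\sigma_C:=\sqrt{2D_C}$ are smooth, local strong existence and pathwise uniqueness are standard; non--explosion follows from Feller's test, because the scale function $s$ with $s'(x)=\exp(-\int_0^x b/D_C)$ satisfies $s(\pm\infty)=\pm\infty$ (the integrand $-b/D_C$ is nonnegative on $(0,\infty)$, integrable at $+\infty$, and odd), so the diffusion is recurrent; the invariant density being $\propto 1/(D_C s')=e^{U_C}/D_C\propto p\in L^1(\rr)$, the speed measure is finite and the process is uniquely ergodic with $\pi_{b,D_C}(\dd x)=p(x)\,\dd x$. (In the Cauchy variant non--explosion is immediate from the Lyapunov function $V(x)=1+x^2$, which satisfies $\LL V\le \kappa V$ for some constant $\kappa$.) Hence, choosing $C_1,C_2$ with $c_1\ne c_2$ --- e.g. $c_1=0$, $c_2=1$, or both $c_i>0$ if both diffusions are to be non--constant --- we obtain $D_1:=D_{C_1}\ne D_{C_2}=:D_2$ (they differ by $(c_1-c_2)e^{x^2/2}\not\equiv 0$), not both constant, with $\pi_{b,D_1}=\pi_{b,D_2}$, so $T_D$ is not injective. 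The one step requiring genuine care is this admissibility check: the family $\{D_C\}$ forces the diffusion to grow like $1/p$, so for the Gaussian example no global--Lipschitz or linear--growth criterion applies and one must invoke one--dimensional diffusion theory (Feller's test, scale and speed measure) to rule out explosion and pin down the invariant measure --- which is precisely why the heavy--tailed Cauchy variant, where $\sigma_C$ has only linear growth, is technically smoother.
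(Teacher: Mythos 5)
Your proposal is correct and follows essentially the paper's route: you exploit the one-parameter gauge family of diffusions solving $(Dp)'=bp$ for a fixed stationary density (the identity \eqref{inv-s}), and your Cauchy variant ($b(x)=-2x/(1+x^2)$, $D_C(x)=ax^2+a+1$) reproduces, for $a=1$ versus $a=0$, exactly the paper's counterexample $D_1(x)=2+x^2$, $D_2(x)=1$, including the same integrability check \eqref{con-1}. The only divergence is your lead Gaussian example, whose diffusion grows like $e^{x^2/2}$ and therefore falls outside the monotone/coercive conditions of Example \ref{ex1} that the paper uses to certify membership in $\AAA$; your substitute verification via scale function and speed measure (recurrence plus finite speed measure proportional to $p$) is a valid alternative, whereas the paper avoids this issue altogether by working only with the Cauchy example.
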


\begin{proof} 
Let $d=1$ and $(b, D_2) \in \AAA$ satisfy \eqref{con-1} for some $U_2 \in \CC^1(\rr; \rr)$ with $U_2=b/D_2$. 
Based on \eqref{inv-s}, for the multiplicative noise case, if we choose, e.g., $x_0=0$ and $D_1(0)=D_2(0)+1$, one can take another solution of Eq. \eqref{mul-den-s} as 
 \begin{align} \label{inv-1}
D_1(x)
& =D_2(x) (1 + C e^{-U_2(x)}), \quad x \in \rr,
 \end{align}
 with $C=[D_1(0)/D_2(0)-1] e^{U_2(0)}$.
 As $D_2(0) > 0$, it is clear that 
 $D_1(x) > D_2(x)$ so that $|D_1(x)| \neq |D_2(x)|$ for any $x \in \rr$.
  
It remains to ensure the unique ergodicity of the above counterexample; for example, we choose 
\begin{align*}
b(x)=-\frac{2x}{1+x^2}, \quad \sigma_2(x)=\sqrt{2}, \quad D_2(x)=1, 
\quad U_2(x)=- \ln (1+x^2), \quad x \in \rr.
\end{align*}
Then by \eqref{inv-1} we get 
\begin{align*}
D_1(x) =2+x^2, \quad 
\sigma_1(x) =\sqrt{2(2+x^2)}, \quad 
U_1(x)=\ln \frac{2+x^2}{1+x^2} , \quad x \in \rr.
\end{align*}
Elementary calculations show 
\begin{align*}
\sup_{x \in \rr} b'(x)=\sup_{x \in \rr} \frac{-2(1+2 x^2)}{\sqrt{1+x^2}}=-2, \quad x \in \rr, \\
\sup_{x \in \rr}  |\sigma_1'(x)|=\frac{\sqrt 2 |x|}{\sqrt{2+x^2}}=\sqrt 2, \quad 
\sup_{x \in \rr}  |\sigma_2'(x)|=0, \quad x \in \rr,
\end{align*}
thereby indicate that $(b, \sigma_1)$ and $(b, \sigma_1)$ satisfy all the conditions in Example \ref{ex1}.

Moreover,  one has
\begin{align*}
\int_\rr \frac{e^{U_2(x)}}{D_2(x)} {\rm d}x=
\int_\rr \frac{e^{U_1(x)}}{D_1(x)} {\rm d}x=\int_\rr \frac1{1+x^2} {\rm d}x =\pi<\infty.
\end{align*}
thereby shows the integrability condition \eqref{con-1} for these two pairs. 
\end{proof}

From Theorem \ref{tm-s-i}, it is known that $T_D$ (denoted by $T_\beta$ from now on) is generally not injective in the multiplicative noise case; the following result shows that they are injective for the $d$-dimensional Langevin equation \eqref{eq-lan}.

\begin{tm} \label{tm-lan-s}
For a fixed $\beta>0$, assume that $(b_k, \beta) \in \AAA$ satisfying \eqref{con-d} for any premitive $U_k \in \CC^1(\rr^d; \rr)$ of $b_k$, $k=1,2$.
Then $T_\beta$ is bijective and its inverse $T_\beta^{-1}$ satisfies 
\begin{align} 
T_\beta^{-1}(p_{b, \beta})
(=\beta) & \equiv \frac{2 b^i}{\partial_{x_i} \ln p_{b, \beta}},
\quad \forall ~ i=1,2,\cdots,d, \label{beta-p} \\
\frac{T_\beta^{-1}(p_{b, \beta_1})}{T_\beta^{-1}(p_{b, \beta_2})} 
\Big(=\frac{\beta_1}{\beta_2}\Big)
& \equiv \frac{\partial_{x_i} \ln p_{b, \beta_2}}{\partial_{x_i} \ln p_{b, \beta_1}},
\quad \forall ~ i=1,2,\cdots,d,  \label{sta-beta}
\end{align}   
\end{tm}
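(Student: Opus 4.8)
The plan is to follow the same route as the proof of Theorem~\ref{tm-lan-b}, now reading the closed form \eqref{lan-d} as an equation for the noise level rather than for the drift. Since $(b,\beta)\in\AAA$ and the integrability condition \eqref{con-d} holds, the Gibbs density \eqref{lan-d} is the invariant density and it is positive, so I would take logarithms to get
\begin{align*}
\ln p_{b,\beta}(x)=\frac{2U(x)}{\beta}-\ln\!\int_{\rr^d}e^{2U/\beta}\,\dd y,\qquad x\in\rr^d .
\end{align*}
Differentiating in each coordinate and using $b=\nabla U$, i.e. $b^i=\partial_{x_i}U$, yields the single identity underlying the whole statement:
\begin{align*}
\partial_{x_i}\ln p_{b,\beta}(x)=\frac{2}{\beta}\,b^i(x),\qquad i=1,\dots,d .
\end{align*}
Solving for $\beta$ wherever $b^i$ does not vanish gives \eqref{beta-p}, and dividing this identity written for $\beta_1$ by the one written for $\beta_2$ (with the same fixed drift $b$) cancels $b^i$ and produces $\partial_{x_i}\ln p_{b,\beta_1}/\partial_{x_i}\ln p_{b,\beta_2}\equiv\beta_2/\beta_1$, which is precisely \eqref{sta-beta}.

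For injectivity of $T_\beta$, suppose $p_{b,\beta_1}=p_{b,\beta_2}$. Then their log-gradients agree, so $\tfrac{2}{\beta_1}b^i\equiv\tfrac{2}{\beta_2}b^i$ for every $i$. Here $b\not\equiv0$: otherwise $U$ would be constant, the drift $\nabla U$ would vanish identically, and \eqref{eq-lan} would reduce to a scaled Brownian motion, which has no invariant probability measure, contradicting $(b,\beta_k)\in\AAA$ (alternatively, a constant $U$ violates \eqref{con-d}). Choosing a point $x_*$ and an index $i_*$ with $b^{i_*}(x_*)\neq0$ then forces $\beta_1=\beta_2$. Combined with the surjectivity of $T_\beta$ onto its image, which is immediate from the definition of its domain (cf. \eqref{t-b}), this shows $T_\beta$ is bijective, with inverse given by \eqref{beta-p}--\eqref{sta-beta}.

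I do not anticipate a genuine obstacle here; the only delicate point is interpretational. The pointwise formula \eqref{beta-p} is meaningful only on the open set $\{b^i\neq0\}=\{\partial_{x_i}\ln p_{b,\beta}\neq0\}$, which is nonempty by the observation above, and the content of the statement is that the displayed quotient is a constant, independent both of $x$ and of the index $i$. Equivalently one may phrase the recovery coordinate-free as $\beta\,\nabla\ln p_{b,\beta}=2b$, so that $\beta$ is read off by comparing $|\nabla\ln p_{b,\beta}|$ with the known $|b|$ at any point where the latter is nonzero; the componentwise version \eqref{beta-p} is then just a convenient restatement, exactly parallel to the relation \eqref{b-p} obtained in Theorem~\ref{tm-lan-b}.
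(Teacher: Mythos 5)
Your proposal is correct and follows essentially the same route as the paper: the paper's proof also reads the Gibbs representation \eqref{lan-d} as $U=\tfrac{\beta}{2}\ln p_{b,\beta}+C$, differentiates, and extracts \eqref{beta-p} and \eqref{sta-beta}. Your write-up merely supplies details the paper leaves implicit, namely the explicit injectivity step (using that $b\not\equiv 0$, since a constant $U$ would violate \eqref{con-d}) and the caveat that \eqref{beta-p} is read off on the set where $b^i\neq 0$.
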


\begin{proof} 
The representation \eqref{lan-d} yields that  
 \begin{align*} 
U(x) = \frac\beta2 \ln p(x) + C.
\end{align*} 
from which we derive \eqref{beta-p} and \eqref{sta-beta}. 
\end{proof}

\begin{rk}
In the case $m \ge d>1$, if $\sqrt \beta$ in the Langevin equation \eqref{eq-lan} is replaced by a constant matrix $\sigma=(\sigma^{ij})_{d \times m}$, then the drift and diffusion inversion problem becomes more complicated.
We point out that the case $m>d>1$ for Eq. \eqref{sode} with $b=\nabla U$ and $\sigma=(\beta {\rm Id}_d ~ {\bf 0}_{d \times (m-d)})$ reduces to Eq. \eqref{eq-lan} which was analyzed in Theorems \ref{tm-lan-b} and \ref{tm-lan-s}.
The general case remains unknown.
\end{rk}

For the high-dimensional Eq. \eqref{sode} driven by additive noise but without the gradient drift, the Fokker-Planck equation \eqref{eq-Fokker-Planck} with $D^{ij}=\delta_{ij} \beta/2$, $i,j=1, 2, \cdots, d$, reduces to
\begin{align} \label{Fokker-Planck-add}
\frac12 \beta \Delta p- \nabla \cdot [b p]=0.  
\end{align} 
Unlike the gradient case, the explicit closed form of the invariant density is not available.
We derive the injectivity of $T_\beta$ (and a different representation from \eqref{beta-p} for its inverse) in the following result, applying the properties of harmonic functions; we note that $T_b$ is generally not injective in high dimensions by Theorem \ref{tm-b-i}.

\begin{tm} \label{tm-s+}
Assume that $(b_k, \beta_k) \in \AAA$ with $\beta_k>0$, $k=1,2$.
Then $\pi_{b, \beta_1}=\pi_{b, beta_2}$ if and only if $\beta_1=\beta_2$, i.e., $T_\beta$ is bijective.
Moreover, $T_\beta^{-1}$ satisfies
\begin{align} 
T_\beta^{-1}(p_{b, \beta})
(=\beta) & \equiv  \frac{\nabla \cdot [b p]}{\Delta p},
\quad \forall ~ i=1,2,\cdots,d, \label{beta-p+} \\
\frac{T_\beta^{-1}(p_{b, \beta_1})}{T_\beta^{-1}(p_{b, \beta_2})} 
\Big(=\frac{\beta_1}{\beta_2} \Big)
& \equiv \frac{\nabla \cdot [b p_1]}{\Delta p_1}/\frac{\nabla \cdot [b p_2]}{\Delta p_2},
\quad \forall ~ i=1,2,\cdots,d.  \label{sta-beta+}
\end{align}   
\end{tm}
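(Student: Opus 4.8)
The plan is to work directly with the stationary Fokker--Planck equation \eqref{Fokker-Planck-add} in the form $\tfrac12\beta\Delta p = \nabla\cdot[bp]$, for each of the two pairs $(b,\beta_1)$ and $(b,\beta_2)$ sharing the common drift $b$. First I would note that, since $(b,\beta_k)\in\AAA$, each $p_k:=p_{b,\beta_k}$ is a positive density solving $\tfrac12\beta_k\Delta p_k=\nabla\cdot[bp_k]$; under the standing regularity in $\AAA$ (and the local boundedness of $b$) the densities are at least $C^2$ by elliptic regularity, so all the derivatives below are classical. The assumption $\pi_{b,\beta_1}=\pi_{b,\beta_2}$ means $p_1=p_2=:p$ $\lambda$-a.e., hence as continuous functions $p_1\equiv p_2\equiv p$. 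Subtracting the two equations for the \emph{same} $p$ gives
\begin{align*}
\tfrac12(\beta_1-\beta_2)\,\Delta p = 0,
\end{align*}
so either $\beta_1=\beta_2$ (the desired conclusion) or $\Delta p\equiv 0$, i.e. $p$ is harmonic on $\rr^d$.

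The crux is therefore to rule out the harmonic alternative. Here I would invoke the classical Liouville-type theorem: a harmonic function on $\rr^d$ that is bounded below (indeed $p>0$) must be constant. But a positive constant is not integrable on $\rr^d$, contradicting $\int_{\rr^d}p\,\dd x=1$. Hence $\Delta p\not\equiv 0$ — in fact $\Delta p$ is not identically zero on any open set would not suffice, but $\Delta p\equiv 0$ \emph{everywhere} is exactly what is excluded — and we conclude $\beta_1=\beta_2$. This proves injectivity of $T_\beta$; surjectivity onto $T_\beta(\AAA_\beta)$ is automatic by definition, so $T_\beta$ is bijective. The representation \eqref{beta-p+} then follows by solving $\tfrac12\beta\Delta p=\nabla\cdot[bp]$ pointwise for $\beta$ at any point where $\Delta p(x)\neq 0$ (such points exist, again because $\Delta p\not\equiv 0$), giving $\beta\equiv 2\,\nabla\cdot[bp]/\Delta p$; and \eqref{sta-beta+} is the immediate ratio of two such identities for $\beta_1$ and $\beta_2$ with their respective densities $p_1,p_2$.

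The main obstacle — really the only nontrivial point — is justifying that $p$ cannot be harmonic, which rests on the Liouville theorem for positive (or one-sided bounded) harmonic functions together with the normalization $\int p=1$; everything else is algebraic manipulation of \eqref{Fokker-Planck-add}. A secondary technical care point is the regularity needed to write $\Delta p$ and $\nabla\cdot[bp]$ classically: if $b$ is merely locally bounded one should phrase the argument in the distributional sense, observing that $(\beta_1-\beta_2)\Delta p=0$ holds in $\mathcal D'(\rr^d)$, that $\Delta p=0$ in $\mathcal D'$ forces $p$ to agree a.e. with a (smooth) harmonic function by Weyl's lemma, and then apply Liouville as above; I would remark on this to keep the statement honest under the bare hypotheses of $\AAA$. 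One should also note, for \eqref{beta-p+}--\eqref{sta-beta+}, that the identities are asserted at points where the denominators are nonzero, which is the generic situation guaranteed by $\Delta p\not\equiv 0$.
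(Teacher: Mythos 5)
Your proposal is correct and follows essentially the same route as the paper: subtract the two stationary Fokker--Planck equations \eqref{Fokker-Planck-add} for the common density to obtain $(\beta_1-\beta_2)\Delta p=0$, then exclude the harmonic alternative because a positive harmonic function on $\rr^d$ is incompatible with $\int_{\rr^d}p\,\dd x=1$ (the paper gets this directly from the mean value property, bounding $p(x)\le |B_R(x)|^{-1}\to 0$ as $R\to\infty$, rather than citing Liouville and Weyl as you do), and the representation formulas follow by solving the equation pointwise for $\beta$. One minor point in your favor: the constant you obtain, $\beta=2\,\nabla\cdot[bp]/\Delta p$, is the one actually forced by \eqref{Fokker-Planck-add}; the factor $2$ is missing from the displayed formula \eqref{beta-p+}.
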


\begin{proof}
Let $p_{b, \beta_1}=p_{b, \beta_2}$ with two constants $\beta_1$ and $\beta_2$.
Then Eq. \eqref{Fokker-Planck-add}  yields that $\beta_1-\beta_2$ satisfies 
\begin{align*}
(\beta_1-\beta_2) \Delta p=0.  
\end{align*} 
Suppose that $\beta_1 \neq \beta_2$, then $\Delta p=0$ with $p>0$ satisfying $\int_{\rr^d} p(x) \dd x=1$.
Consequently, $p$ is a positive harmonic function in $\rr^d$ and thus constant.
Indeed, due to the average property of any harmonic function, we have 
 \begin{align*}
p(x) =\frac1{B_R(x)} \int_{B_R(x)} p(y) \dd y, \quad \forall~ x \in \rr^d,
\end{align*} 
where $B_R(\cdot)$ is a ball with radius $R>0$ centered at $x \in \rr^d$.
As $p$ is positive,
\begin{align*}
p(x) \le \frac1{B_R(x)} \int_{\rr^d} p(y) \dd y=\frac1{B_R(x)}, \quad \forall~ x \in \rr^d,
\end{align*} 
Now taking $R \to \infty$, we have 
\begin{align*}
p(x) \le \lim_{R \to \infty} \frac1{B_R(x)}=0, \quad \forall~ x \in \rr^d,
\end{align*} 
from which we obtain $p(x)=0$, which is a contradiction of $\int_{\rr^d} p(x) \dd x=1$.
This shows that $\beta_1= \beta_2$ and the injectivity of $T_\beta$.

The same analysis, in combination with Eq. \eqref{eq-Fokker-Planck}, implies \eqref{beta-p+} and \eqref{sta-beta+}.
\end{proof}

We end this section by giving an example about the coefficients of Eq. \eqref{sode} such that all the counterexamples given in Section \ref{sec3} hold.
 
\begin{ex}  \label{ex1}
Assume that $b$ and $\sigma$ in Eq. \eqref{sode} satisfy the following coupled conditions of monotone, coercive, and polynomial growth for some positive constants $L_i$, $i=0,1,2,3,4$ and $q$, and for all $x,y \in \rr^d$:
\begin{align} 
& 2 \<b(x)-b(y), x-y\>+ \|\sigma(x)-\sigma(y)\|^2 
\le L_0|x-y|^2,  \label{mon-} \\
& 2 \<b(x), x\> + \|\sigma(x)\|^2 \le L_1 - L_2 |x|^2,  \label{coe-} \\
& |b(x)| \le L_3+L_4|x|^q, \label{pol-} \\
& D(x) =\sigma(x) \sigma(x)^\top \in \rr^{d \times d}~\text{is positive definite for any}~x \in \rr^d.  \label{non} 
\end{align}   
The above conditions, in particular, include polynomial drift (with a negative leading coefficient) and diffusion functions with $2a_{2k+1}+c_{k+1}^2<0$: 
 \begin{align*}
b(x)=\sum_{i=0}^{2k+1} a_i x^i, \quad
\sigma(x)=\sum_{j=0}^{k+1} c_j x^j,
\quad x \in \rr,
 \end{align*} 
 where $k \in \nn_+$, $a_i, c_j \in \rr$, $i=0,1,\cdots,2k+1$, $j=0,1,\cdots,k+1$.   
It is known that under these conditions, Eq. \eqref{sode} is uniquely ergodic (see, e.g., \cite{LL25}).
\end{ex}

\section{Drift and diffusion Inversion for SPDEs driven by additive noise}
\label{sec4}

In this section, we apply the method developed in Section \ref{sec3} to an infinite-dimensional SDE or its equivalent SPDE form.
The infinite-dimensional case is considerably more complicated than the previous finite-dimensional case, partly due to the absence of exact analogs of Lebesgue measure.

For simplicity, we consider the following second-order parabolic SPDE driven by additive noise under the homogeneous Dirichlet boundary condition on the separable Hilbert state space $H=\{u \in L^2(0,1):~ u(0)=u(1)=0\}$:
\begin{align}\label{spde}
\begin{split}
    \dd X(t,\xi) =\Delta X(t,\xi)+ U'(X(t,\xi)) \dd t + \sqrt \beta \dd W(t,\xi), 
   & \quad (t,\xi) \in \rr_+ \times (0,1); \\
    \quad X(0,\xi)=X_0(\xi), & \quad \xi \in (0,1).
    \end{split}
\end{align}
Here, $X_0 \in H$, $\beta>0$ is a constant, and $\{W(t, \cdot)=\sum_{i=1}^\infty B_t^i e^i(\cdot),~  t\ge 0\}$ is a cylindrical Wiener process on $H$ with $\{e^i\}_{i\ge 1}$ and $\{B_\cdot^i\}_{i\ge 1}$ being an orthonormal basis of $H$ and a sequence of independent one-dimensional Brownian motions, respectively; the temporal derivative of $W$ in the sense of distribution is the so-called (space-time) white noise.  
 
Though there is no Lebesgue measure in $H$, we choose the reference measure to be the centered Gaussian measure $\mu_0$ on $H$ with covariance operator $(-\Delta)^{-1}$ (which is of trace-class). 
Indeed, it is clear that $-\Delta$ has discrete spectrum with positive eigenvalues $\{\lambda_i=(k \pi)^2\}_{i\ge 1}$ listed in the increasing order counting multiplicities satisfying ${\rm Tr (-\Delta)}:=\sum_{i \ge 1} \lambda_i=1/6<\infty$.

Denote by $\AAA$ agian the set of assumptions on $(b, \beta)$ such that Eq. \eqref{sode} has a unique mild (which is probabilistically strong) solution with a unique ergodic measure $\pi_{b, \beta} \in \PP(H):=\{\text{probabily measures on}~H\}$ having a positive density $p_{b, \beta}$ with respect to $\mu_0$ on $H$.
We note that $p_{b, \beta}$ is $\mu_0${\rm -a.e.} uniquely determined by $\pi_{b, \beta}$.
It is clear $\AAA$ is nonempty as this assumption is valid under broad conditions that include all the examples given in Section \ref{sec4}; see, e.g., \cite{Liu25}. 

For each pair of $(b, \beta) \in \AAA$, we obtain a solution $\pi_{b, D} \in \PP(H)$ with density $p_{b, \beta} \in \DD(H):=\{\text{density functions on}~H~ \text{with respect to}~\mu_0\}$ and we can define the measurement operator 
\begin{align}  \label{t-s+}
\begin{split}
T_\beta: \AAA_\beta =\{b:~(b, \beta) \in \AAA\} & \longrightarrow T_{\beta}(\AAA_\beta) \subset \DD(H) \\
b & \longmapsto p_{b, \beta}=T_\beta(b, \beta),
\end{split}
\end{align} 
for a fixed $\beta$, and  
\begin{align}  \label{t-b+}
\begin{split}
T_D: \AAA_D =\{D:~(b, \beta) \in \AAA\} & \longrightarrow T_D(\AAA_D) \subset \DD(H) \\
D & \longmapsto p_{b, \beta}=T_D(b, \beta),
\end{split}
\end{align}  
for a fixed $b$. 
By the definitions of $\AAA_\beta$ and $\AAA_D$ in \eqref{t-s+} and \eqref{t-b+}, respectively, it is clear that $T_\beta: \AAA_\beta \to T_\beta(\AAA_\beta)$ and $T_D: \AAA_D \to T_D(\AAA_\beta)$ are surjective and usually nonlinear.
Our main aim is to determine whether $T_\beta: \AAA_\beta \to T_\beta(\AAA_\beta)$ and $T_D: \AAA_D \to T_D(\AAA_\beta)$ are injective, and, if they are injective (and thus bijective), determine the representation of their inverses, denoted by $T_\beta^{-1}$ and $T_D^{-1}$, as in the finite-dimensional cases studied in Theorems \ref{tm-lan-b} and \ref{tm-s+}.

It is not difficult to show that the Gibbs measure  
\begin{align} \label{lan-spde}
\mu(\dd x):= Z_U^{-1} e^{\frac{2 U(x)}\beta}\mu_0(\dd x), \quad x \in H,
\end{align}
is the unique invariant measure, in the sense of \eqref{df-im} and \eqref{df-erg}, where $H$ there and \eqref{df-pt} is replaced by $H$, of Eq. \eqref{spde} in $H$ 
provided 
\begin{align} \label{con-spde}
Z_U:= \mu_0(e^{\frac{2 U(x)}\beta})<\infty. 
\end{align}

The following result is a generalization of the finite-dimensional cases studied in Theorems \ref{tm-lan-b} and \ref{tm-s+}, which shows the bijectivity of $T_b$ and $T_\beta$ and the representations of their inverses.

\begin{tm} \label{tm-spde}
Assume that $(b_k, \beta_k) \in \AAA$ with constants $\beta_k>0$ satisfy \eqref{con-spde} for any premitive $U_k \in \CC^1(\rr; \rr)$ of $b_k$, $k=1,2$.
\begin{enumerate}
\item
$T_b$ is bijective and its inverse $T_b^{-1}$ satisfies $\mu_0${\rm -a.e.}
\begin{align}  
T_b^{-1}(p_{b, \beta}) (=b) 
& =\frac\beta2 \nabla \ln p_{b, \beta}(x),  \label{b-p-spde} \\
T_b^{-1}(p_{b_1, \beta})-T_b^{-1}(p_{b_2, \beta}) (=b_1-b_2)
& =\frac\beta2 [\nabla \ln p_{b_1, \beta}(x)-\nabla \ln p_{b_2, \beta}(x)].  \label{sta-b-spde}
\end{align}

\item
$T_\beta$ is bijective and its inverse $T_\beta^{-1}$ satisfies $\mu_0$-{\rm a.e.}
\begin{align}  \label{beta-p-spde}
T_\beta^{-1}(p_{b, \beta_1}) (=\beta) & =\frac{2 \<e_k, b(x)\>}{D_{e_k} \ln p(x)}, 
\quad \forall ~ k \ge 1, \\ 
\frac{T_\beta^{-1}(p_{b, \beta_1})}{T_\beta^{-1}(p_{b, \beta_2})} (=\frac{\beta_1}{\beta_2})
 & \equiv \frac{D_{e_k} \ln p_{b, \beta_2}}{D_{e_k} \ln p_{b, \beta_1}}\quad \forall ~ k \ge 1, \label{sta-beta-spde}
\end{align} 
where $D_{e_k}$ is the directional derivative with respect to $e_k$
\end{enumerate}  
\end{tm}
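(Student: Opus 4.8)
\textbf{Proof proposal for Theorem \ref{tm-spde}.}

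The plan is to mirror, at the level of the infinite-dimensional Gibbs measure \eqref{lan-spde}, the elementary computation used in the finite-dimensional Theorems \ref{tm-lan-b} and \ref{tm-s+}. The starting point is the representation \eqref{lan-spde}: under \eqref{con-spde}, Eq. \eqref{spde} is uniquely ergodic with invariant measure $\mu(\dd x)=Z_U^{-1}e^{2U(x)/\beta}\mu_0(\dd x)$, so that the density with respect to the reference Gaussian $\mu_0$ is $p_{b,\beta}(x)=Z_U^{-1}e^{2U(x)/\beta}$. Taking logarithms yields, $\mu_0$-a.e.,
\begin{align*}
\ln p_{b,\beta}(x)=\frac{2U(x)}{\beta}-\ln Z_U,
\end{align*}
which already encodes everything: $U$ is recovered from $\ln p_{b,\beta}$ up to the additive constant $\ln Z_U$, and this constant disappears upon differentiation.

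\emph{Part (1) (drift inversion, $\beta$ fixed).} First I would apply the (Malliavin/Gateaux) gradient $\nabla$ on $H$ to the displayed identity; since $\ln Z_U$ is a constant and $b_k=\nabla U_k$ by hypothesis, this gives $\nabla\ln p_{b,\beta}(x)=\tfrac{2}{\beta}\,b(x)$, $\mu_0$-a.e., i.e.\ \eqref{b-p-spde}, and subtracting the identities for $b_1,b_2$ (same $\beta$) gives \eqref{sta-b-spde}. Injectivity of $T_b$ then follows: if $p_{b_1,\beta}=p_{b_2,\beta}$, then $\nabla(U_1-U_2)=0$ $\mu_0$-a.e., and since $\mu_0$ has full topological support on $H$ and the $U_k$ are $\CC^1$, $U_1-U_2$ is constant, hence $b_1=b_2$. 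Surjectivity onto $T_b(\AAA_b)$ is immediate by definition, so $T_b$ is bijective.

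\emph{Part (2) (diffusion/noise-intensity inversion, $b$ fixed).} Here I would not differentiate the whole gradient but test the log-density identity against a fixed basis vector $e_k$: applying the directional derivative $D_{e_k}$ gives $D_{e_k}\ln p_{b,\beta}(x)=\tfrac{2}{\beta}\langle e_k,\nabla U(x)\rangle=\tfrac{2}{\beta}\langle e_k,b(x)\rangle$, $\mu_0$-a.e., which rearranges to \eqref{beta-p-spde}; dividing the identities for $\beta_1,\beta_2$ (same $b$) yields \eqref{sta-beta-spde}. For injectivity of $T_\beta$: if $p_{b,\beta_1}=p_{b,\beta_2}$ then $(2/\beta_1-2/\beta_2)U(x)$ is $\mu_0$-a.e.\ constant; since $U$ is non-constant (otherwise $b\equiv 0$ and Eq. \eqref{spde} — a linear stochastic heat equation — is indeed uniquely ergodic, so strictly one must instead argue via the counterpart of the harmonic-function argument of Theorem \ref{tm-s+}, or simply exclude this degenerate case as was done there), we get $\beta_1=\beta_2$.

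\emph{Main obstacle.} The delicate points are all analytic rather than algebraic. First, one must justify the representation \eqref{lan-spde} and, crucially, that the Gateaux/Malliavin derivatives $\nabla\ln p_{b,\beta}$ and $D_{e_k}\ln p_{b,\beta}$ exist $\mu_0$-a.e.\ and satisfy the chain rule $\nabla(e^{2U/\beta})=\tfrac{2}{\beta}(\nabla U)e^{2U/\beta}$ in the infinite-dimensional setting — this requires $U$ (equivalently $b$) to have enough regularity and growth control for $e^{2U/\beta}$ to lie in a suitable Sobolev space over $(H,\mu_0)$, which is exactly the content packed into the assumption class $\AAA$ and the references (e.g.\ \cite{Liu25}). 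Second, passing from "$\nabla(U_1-U_2)=0$ $\mu_0$-a.e." to "$U_1-U_2$ constant" uses connectedness and the full support of $\mu_0$ on $H$; one should state this as a lemma or cite it. Third, the genuinely degenerate case $b\equiv 0$ in Part (2) must be handled separately (as in Theorem \ref{tm-s+}), since there the log-density identity carries no information about $\beta$ and one falls back on the fact that the reference Gaussian $\mu_0$ is itself the invariant measure independently of $\beta$ — here uniqueness of $\beta$ genuinely fails, so the hypothesis implicitly excludes it, and this subtlety should be flagged rather than glossed over.
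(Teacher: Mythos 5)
Your proposal is correct and follows essentially the same route as the paper: take the logarithm of the Gibbs representation \eqref{lan-spde}, so that $\ln p_{b,\beta}=\tfrac{2U}{\beta}+C$, and then apply the gradient (for part (1)) or the directional derivatives $D_{e_k}$ (for part (2)) to obtain \eqref{b-p-spde}--\eqref{sta-beta-spde}. The analytic caveats you flag (existence of the Gateaux/Malliavin derivatives of $\ln p_{b,\beta}$, passing from $\nabla(U_1-U_2)=0$ $\mu_0$-a.e.\ to constancy, and the degenerate case $b\equiv 0$ in part (2)) are passed over silently in the paper's two-line proof, so your treatment is, if anything, more careful.
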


\begin{proof} 
It follows from the representation \eqref{lan-spde} that 
\begin{align*}
\ln p(x)= \frac{2 U(x)}{\beta} +C,  
\end{align*}  
from which we derive \eqref{b-p-spde} and \eqref{sta-b-spde}. 

Taking the directional derivative $D_{e_k}$ with respect to $e_k$ in the above identity, we have 
\begin{align*}
D_{e_k} \ln p(x(\xi))= \frac{2 D_{e_k} U(x(\xi))}{\beta}= \frac{2 \<e_k, b(x)\>}{\beta}.
\end{align*}  
and conclude \eqref{beta-p-spde} and \eqref{sta-beta-spde}.
\end{proof}

We end this section by giving an example about the coefficients of Eq. \eqref{spde} such
that all the assumptions given in Section \ref{sec4} hold.

\begin{ex}  \label{ex2}
Assume that there exist constants $K_1, K_2 \in \rr$, $K_3>\lambda_1$, $K_4, K_5>0$, and $q \ge 1$ such that for any $\xi, \eta \in \rr$,
\begin{align}  
[b(\xi)- b(\eta)](\xi-\eta)  & \le (K_1 + \lambda_1)|\xi-\eta|^2, \label{mon}  \\
b(\xi) \xi & \le K_2 - (K_3 - \lambda_1) |\xi|^2,  \label{coe} \\
|b(\xi)| & \le K_4 + K_5 |\xi|^q. \label{grow}
\end{align}
Under these conditions, the assumption \eqref{con-spde} holds, and Eq. \eqref{spde} has a unique mild solution which is uniquely ergodic in $H$ with respect to the invariant measure given in \eqref{lan-spde}; see, e.g., \cite{Liu25, Wan23} for ergodic analysis.
\end{ex}

 \bibliographystyle{plain}
  \bibliography{bib.bib}

\end{document}